\DeclareMathAlphabet{\mathpzc}{OT1}{pzc}{m}{it}
\numberwithin{equation}{section}
\def\eqnarray{\stepcounter{equation}\let\@currentlabel=\theequation
\global\@eqnswtrue
\tabskip\@centering\let\\=\@eqncr
$$\halign to \displaywidth\bgroup\hfil\global\@eqcnt\z@
  $\displaystyle\tabskip\z@{##}$&\global\@eqcnt\@ne
  \hfil$\displaystyle{{}##{}}$\hfil
  &\global\@eqcnt\tw@ $\displaystyle{##}$\hfil
  \tabskip\@centering&\llap{##}\tabskip\z@\cr}
\def\endeqnarray{\@@eqncr\egroup
      \global\advance\c@equation\m@ne$$\global\@ignoretrue}
\newtheorem{theorem}{Theorem}[section]
\newtheorem{lemma}[theorem]{Lemma}
\newtheorem{problem}[theorem]{Problem}
\newtheorem{remark}[theorem]{Remark}
\numberwithin{equation}{section}
\newcommand{\Om}{\Omega}
\newcommand{\V}{\mathcal{V}}
\newcommand{\Bl}[1]{a\left( #1\right)}
\newcommand{\Ian}[1]{\left\langle #1\right\rangle}
\newcommand{\on}{~\text{ on }~}
\title[Elliptic Reconstruction for Parabolic Variational Inequalities]{Elliptic Reconstruction and A Posteriori Error Estimates for Parabolic Variational Inequalities}
	\author{Harbir Antil}
\address{Harbir Antil, Department of Mathematical Sciences and the Center for Mathematics and Artificial Intelligence (CMAI), George Mason University,  Fairfax, VA 22030, USA.}
\email{hantil@gmu.edu}
\author{Rohit Khandelwal}
\address{Rohit Khandelwal, Department of Mathematical Sciences and the Center for Mathematics and Artificial Intelligence (CMAI), George Mason University,  Fairfax, VA 22030, USA.}
\email{rkhandel@gmu.edu}
\thanks{This work is partially supported by NSF grant DMS-2110263, Air Force Office of Scientific
			Research (AFOSR) under Award NO: FA9550-22-1-0248, and Office of Naval Research (ONR) 
			under Award NO: N00014-24-1-2147.}
\keywords{Elliptic reconstruction, Parabolic variational inequality, Adaptive Finite Element Method.}
\subjclass[2020]{
65N30, 
65M60, 
35R35  
}
\begin{document}

\begin{abstract}
Elliptic reconstruction property, originally introduced by Makridakis and Nochetto for linear parabolic problems, is a well-known tool to derive optimal a posteriori error estimates. No such results are known for nonlinear and nonsmooth problems such as parabolic variational inequalities (VIs). This article establishes the elliptic reconstruction property for parabolic VIs and derives a posteriori error estimates in $L^{\infty}(0,T;L^{2}(\Omega))$. The estimator consists of discrete complementarity terms and standard residual. As an application, the residual-type error estimates are presented.
\end{abstract}

\maketitle

\section{Introduction} \label{sec1}

Parabolic variational inequalities (VIs) arise in many applications. Examples include,
Stefan problem for phase transition \cite{MR0737005} and American options in 
finance \cite{wilmott1993option}. Designing numerical schemes to approximate such  
problems is challenging due to the problems being nonlinear and nonsmooth nature 
of the continuous solution. In 
addition, these numerical schemes must be constraint preserving, for instance bound constraints,
or the constraint violation error must remain under control. The focus of this paper is on 
such challenges. Notice that solving these problems in itself is a challenge, which 
we do not address here, see for instance \cite{MR0737005,MHintermueller_KIto_KKunisch_2002a}. 

During last two decades, significant research focus has been on adaptive finite element methods (AFEM) for elliptic partial differential equations (PDEs) \cite{ainsworth2011posteriori,verfurth1996review}. Starting from \cite{eriksson1991adaptive,eriksson1995adaptive}, there have been advances in AFEM for parabolic problems as well, but not to the extent of elliptic problems.  See also \cite{MR2772095} for instance for AFEM for Allen-Cahn and \cite{MR4593215} for AFEM using convex duality. AFEM popularity can been attributed to its ability to capture singularities and minimize computational costs. A key mathematical tool in the development of efficient adaptive versions of numerous numerical schemes is the use of a posteriori estimates. These estimates provide a fresh perspective on the theoretical exploration of a scheme's behavior. This becomes particularly valuable in situations where ``reasonable discretizations" may not consistently yield expected results. The situation is more delicate when it comes to VIs. Several contributions exists on a posteriori estimation of elliptic VIs: Ainsworth, Oden, and Lee \cite{ainsworth1993local}, Hoppe and Kornhuber \cite{hoppe1994adaptive}, Chen and Nochetto \cite{chen2000residual}, Veeser \cite{veeser2001efficient}, Bartels and Carstensen \cite{bartels2004averaging} and Nochetto, Petersdorff and Zhang \cite{nochetto2010posteriori}. The works related to a posteriori error estimation of the parabolic VIs are significantly limited. To the best of authors' knowledge, the articles \cite{moon2007posteriori,achdou2008posteriori} are only available works which are related to the adaptive FEM of parabolic VIs in the energy norm.

The goal of this work is to introduce a continuous {\it elliptic reconstruction} for parabolic VIs and use it to derive abstract a posteriori error 
estimates in $L^{\infty}(0,T;L^{2}(\Omega))$ norm 
for the semidiscrete finite element approximations, of any degree $k \geq 1$, to parabolic VIs. This construction is motivated by Makridakis and Nochetto \cite{makridakis2003elliptic} where the authors developed this reconstruction property for linear parabolic problems. See also \cite{MR2772095} for instance for AFEM for Allen-Cahn and \cite{MR4593215} for AFEM using convex duality. In the setting of the current paper, the introduced reconstruction, $\mathcal{W}(t): \V_h \rightarrow \V$ of the discrete solution $w_h(t)$, where $t \in [0,T]$, also accounts for the Lagrange multipliers corresponding to the parabolic VIs. Notice that here $\V$ and $\V_h$ are the continuous and discrete spaces, respectively. 

The key observation in our analysis is the following inequality (Lemma \ref{pointerror}) for the difference between elliptic reconstruction $\mathcal{W}$ and continuous solution $w$:
\begin{align} \label{pointwise}
(\mathcal{W}-w)_t -\Delta (\mathcal{W}-w) \leq \sigma_h + (\mathcal{W}-w_h)_t
\end{align}
where $\sigma_h \in \V_h$ is the discrete Lagrange multiplier \cite{fierro2003posteriori,nochetto2005fully}. Notice that, similarly to the continuous setting, 
$\sigma_{h}$ converts a discrete parabolic VI into an equality. For every $t$ when you think of the discrete VI as an elliptic (equality) problem, then the elliptic reconstruction $\mathcal{W}$ 
is precisely the continuous representation of solution to this discrete problem for every $t$. The right-hand-side of the continuous elliptic reconstruction problem depends on computable quantities $w_h$, the data $f$, $\Omega$ and $\sigma_{h}$. 
Thus, any known residual type a posteriori error estimates which are valid for elliptic problems on $\Omega$ can now be used to estimate $\mathcal{W}-w_h$.

\medskip
\noindent
{\bf Literature Survey:} 
Let us now discuss the origins of elliptic reconstruction. Recall the linear parabolic 
problem 
\begin{problem} \label{p1}Find $u: [0,T] \rightarrow \Omega$ such that
	\begin{align*}
	&u_t -\Delta u =f \quad  \text{in } \Omega \times (0,T], \\ &u(\cdot, 0)= u_0(\cdot)  \quad \text{in } \Omega, \\ &u=0 \quad \text{on }\partial \Omega \times (0,T] .
	\end{align*} 
	\end{problem}
Wheeler initially carried out the a priori error analysis for Problem \ref{p1} in the energy norm in \cite{wheeler1973priori}. The main idea behind this analysis is the elliptic projection operator $U$ which for any $t$ satisfies 
\begin{align} \label{k1}
\int_{\Omega} \nabla (U-u) \cdot \nabla v_h dx =0 \quad \forall v_h \in V_h \, .
\end{align}
It was further established that $U$ is the Galerkin approximation in $V_h$ to the weak solution of the corresponding linear elliptic problem
	\begin{align*}
-\Delta u &=f-u_t \quad  \text{in } \Omega, \\ u&=0 \qquad \text{on }\partial \Omega.
\end{align*}
Thus any a priori error estimates which are valid for elliptic problems on $\Omega$ can now be used to estimate $U-u_h$. The overall error $u-u_h$ for Problem \ref{p1} can then be derived by first estimating $u-U$ using standard PDE techniques and then estimating $U-u_h$ using elliptic a priori error analysis. The subsequent $L^2$-error estimates derived in \cite{wheeler1973priori} are of optimal order.

Later on, Makridakis and Nochetto \cite{makridakis2003elliptic} introduced the elliptic reconstruction map which can be regarded as an ``a posteriori" analogue to the Ritz-Wheeler projection defined in \eqref{k1}. The authors in \cite{makridakis2003elliptic} consider semidiscrete finite element approximations of any degree $k \geq 1$ and proved the optimal order a posteriori error estimates in $L^{\infty}(0,T; H^1_0(\Omega))$. They derived the pointwise equation which is satisfied by $u-U$, that is further used to compute estimates in terms of $u_{h,t}-U_t$. The key observation in their analysis is that $u_h$ is the discrete solution of the elliptic problem whose continuous solution is the elliptic reconstruction. In the context of elliptic reconstruction, we refer to \cite{lakkis2006elliptic} for the a posteriori error analysis of the fully discrete finite element approximations to Problem \ref{p1} in $L^{\infty}(0,T; H^1_0(\Omega))$.

\medskip
\noindent
{\bf Outline:} The article is organized as follows: section \ref{sec2} focuses on the problem formulation, i.e., parabolic VI. In section \ref{sec3}, we introduce the discretization and provide the semidiscrete version of parabolic VI. The elliptic reconstruction operator $\mathcal{W}$ is defined in section \ref{sec4} and therein, we prove the key orthogonality relation between $\mathcal{W}$ and discrete solution $w_h$. In section \ref{sec5}, we begin with the proof of inequality \eqref{pointwise} and we derive the abstract a posteriori error estimates in the energy norm. 
Lastly, we employ the standard residual-type error estimates and compute the explicit form of the error estimator in section \ref{sec6}.

\section{Problem Formulation} \label{sec2}

Let $\mathcal{V}:= H^1_0(\Om)$ and $Q:=\Om\times (0,T]$ where $\Omega \subset \mathbb{R}^d~;~d \in \{2,3\}$ is an open and bounded domain with boundary $\partial \Omega$ and $T>0$ is any fixed time. Let $f \in L^2(0, T; L^2(\Om))$ and $w_0 \in L^2(\Om)$ be the given data and $\chi \in H^1(Q)$ be a given obstacle such that $\chi \leq 0$ on $[0, T] \times \partial \Om$. We denote 
\begin{align*}
\mathcal{K}_{\chi}(t):=\left\{v\in \mathcal{V} \ |\ v\geq \chi(t)  \text{ a.e. in } \Omega \right\} \text{ a.e. } t \in [0,T]
\end{align*} to be a non-empty, closed and convex subset of $\V$. Then the continuous parabolic VI reads as follows: we seek a map $w:[0,T] \longrightarrow \mathcal{V}$ such that $w \in L^2(0,T;H^1(\Omega)) \cap C^0([0,T]; L^2(\Omega))$ and $w_t \in L^2(0,T; \mathcal{V}^*)$ ($\mathcal{V}^*$ is the dual of $\mathcal{V}$) (cf, \cite{MR0428137,MR0737005}) where $\mathcal{K} :=\left\{v\in L^2(0, T; \mathcal{V}) \ |\ v(t) \in  \mathcal{K}_{\chi}(t)~ \text{for a.e.~} t \in [0,T] \right\}$ and the following holds a.e. on $[0, T]$ 
	\begin{align}
		&\Ian{w_t(t),v-w(t)}_{-1,1}+\Bl{w(t),v-w(t)}\geq (f(t),v-w(t))~~ \forall \ v \in \mathcal{K}_{\chi}(t), \label{CVI}\\
		&w(0)=w_0(x) \on \Om \quad \text{and }  \quad w(t)\in \mathcal{K}_{\chi}(t) \text{ a.e.~} t \in [0, T],
		\label{CVI1}
	\end{align}
	where $a(p,q):= \int_{\Om} \nabla p \cdot \nabla q~dx~ \forall~ p, q \in \mathcal{V}$,  $\Ian{\cdot,\cdot}_{-1,1}$ denotes the duality pairing between $\mathcal{V}$ and $\mathcal{V}^*$ and $(\cdot,\cdot)$ is the $L^2(\Omega)$ inner-product. For our analysis, it is important to note that there exists a continuous Lagrange multiplier $\sigma \in L^2(0,T; \mathcal{V}^*)$ such that $\sigma(t)$ satisfies the following equation:
	\begin{align} \label{def:sigma}
	\Ian{\sigma(t),v}_{-1,1}=\Ian{w_t(t),v}_{-1,1}+\Bl{w(t),v}-(f(t),v)~~ \forall \ v\in \mathcal{V} \,, \text{ a.e.~} t \in (0, T].
	\end{align}
	\begin{remark}\label{rem:sign}
\rm 	
		Let $\phi \in \mathcal{V}$ be such that $\phi \geq 0$, and let $v=w(t)+\phi \in \mathcal{K}_{\chi}(t)$ in \eqref{CVI}, then from \eqref{def:sigma} it holds that $\sigma \geq 0,$ i.e., $\Ian{\sigma(t),\phi}_{-1,1} \geq 0~~\forall~ 0 \leq \phi \in \mathcal{V}$ and for a.e. $t \in (0,T]$. In the sense of distributions, we observe that $\sigma= w_t -f -\Delta w \geq 0$ in $\mathcal{C}:=\{w=\chi\}$ and $\sigma=0$ in $\mathcal{N}:=\{w > \chi\}$.\\
		Using \cite[pp. 96 and 100]{MR0428137} and \cite{MR4029105}, and if  
		$\Omega$ is bounded and open subset with sufficiently smooth boundary, $f \in L^2(0,T;L^2(\Omega))$, $w_0 \in H^1_0(\Omega)$, $\chi$ sufficiently smooth, then 
		$w$
		solving \eqref{CVI} additionally fulfills $w \in   C^0([0,T]; \V) \cap L^2(0,T; H^2(\Omega))$ and $w_t \in L^2(0,T; L^2(\Omega))$. Because of this, from \eqref{def:sigma}, we have that $\sigma \in L^2(0,T;L^2(\Omega))$. Thus $\sigma \ge 0$, a.e. in $Q$, we will use this to prove our key Lemma~\ref{pointerror}. 
	\end{remark}
	\begin{remark}\label{rem:comp}
	\rm 
	From the theory of variational inequalities \cite{glowinski1980numerical,achdou2008posteriori}, we next 
	state the complementarity conditions for a.e. $t \in (0,T]$:
	\begin{align*}
	\sigma \geq 0,~w(t) \geq \chi(t) \mbox{ an d }  \Ian{\sigma(t), w(t)-\chi(t)}_{-1,1}=0.
	\end{align*}
\end{remark}
\section{Notations and Discretization} \label{sec3}
Let $\Om_{h}$ be shape regular $d$-simplicial triangulations of $\Om \subset \mathbb{R}^d$. Denote by $K \in \Om_{h}$ a $d$-simplex that is a triangle in $\mathbb{R}^2$ or a tetrahedron in $\mathbb{R}^3$ and $|K|$ indicates the $d$-dimensional Lebesgue measure of $K$. The discrete space $\V_{h}$ is defined by
\begin{align}
\V_{h}:= \{v_h \in \mathcal{V} \, : \, v_h|_K \in \mathbb{P}_k(K)~\forall~ K \in \Om_{h}\} ,
\end{align}
where $ \mathbb{P}_k(K)$ denote polynomials of degree less than or equal to $k$ and $k \geq 1$ be an integer. Let us introduce the non-empty, closed and convex subset of $\V_{h}$: 
\begin{align}
\mathcal{K}_{h,\chi}(t):= \{v_h \in \V_{h},~v_h|_K \geq \chi_h|_K(t)~ \forall K \in \Omega_h \} \text{ a.e. } t \in [0,T],
\end{align}
where $\chi_h$ is the approximation (for instance, Lagrange \cite{brenner2007mathematical}) of $\chi$ and satisfying $\chi_h(t) \leq 0$ on $[0,T] \times \partial \Om$. The set of all vertices of triangle $K$ is denoted by $\V_K$. Let $\omega_e$ be the union of all $d$-simplices sharing the edge $e$. The set of all egdes of $K$ is denoted by $\mathcal{E}_{h,K}$. Denote the set of all vertices of a triangle of $\Om_{h}$ that are in $\Om$ (resp. on $\partial \Om$) by $\mathcal{N}^i_{h}$ (resp. $\mathcal{N}^b_{h}$). Set $\mathcal{N}_{h}:=\mathcal{N}^i_{h} \cup \mathcal{N}^b_{h}$. Define $\mathcal{E}_{h}:=\mathcal{E}^i_{h} \cup \mathcal{E}^b_{h}$ is the set of all edges in $\Om_{h}$, where $\mathcal{E}^i_{h}$ denotes the set of all interior edges and $\mathcal{E}^b_{h}$ denotes the set of all boundary edges of $\Om_{h}$. We set $\Gamma_h:= \cup_{e \in \mathcal{E}^i_{h}} e$ to be union of all the inter-element boundaries of $\Om_{h}$. The set of the Lagrange nodal basis functions of $\V_{h}$ is $\{\phi_p,~p \in \mathcal{N}^i_{h}\}$. For $p \in \mathcal{N}_{h}$, let $\omega_p=\text{supp}(\phi_p)$ be the finite element star, i.e., the union of all the triangles in $\Om_{h}$ having $p$ as a common node. We denote by $\Gamma_p:= \Gamma_h \cap \omega_p$ to be the union of all interior edges in $\omega_p$.

Let $e = \partial K^1 \cap \partial K^2$ be a common edge of two triangles $K^1$ and $K^2$, $\mathbf{n}$ be the unit normal vector to $e$ pointing from $K^1$ to $K^2$, then we have
\begin{align}
\mathcal{J}_h:=[[ \partial_{h} w_h]]:= (\nabla w_h|_{K^1}- \nabla w_h|_{K^2}) \cdot \mathbf{n}. \label{jump}
\end{align}
Let $h_e$ be the length of an edge $e$ and $h_K$ be the diameter of $K$. Further, throughout the article, the notation $X \lesssim Y$ means that there exists generic positive constants $C$ such that $X \leq C Y.$ Next, we state the discrete trace inequality \cite[Section 10.3]{brenner2007mathematical} which will be frequently used in later analysis.
\begin{lemma}(Discrete trace inequality) \label{dti}
	Let $v \in H^1(K)$,  $K \in \Omega_h$ and $e$ be an edge of $K$. Then, it holds that
	\begin{eqnarray}\label{2.1}
	&\| v \|^2_{L^2{(e)}} \lesssim h_e^{-1}\| v \|^2_{L^2(K)}+h_e \| \nabla v\|^2_{L^2(K)}.
	\end{eqnarray}
\end{lemma}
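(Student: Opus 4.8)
The plan is to reduce \eqref{2.1} to the classical trace theorem on a fixed reference simplex and then transport it to $K$ by an affine change of variables, tracking how every norm scales with $h_K$. Fix once and for all a reference $d$-simplex $\widehat{K}$ with a distinguished face $\widehat{e}$. Since the trace operator $H^1(\widehat K)\to L^2(\partial\widehat K)$ is bounded, there is an absolute constant $\widehat C$ such that
\begin{align*}
\| \widehat v \|^2_{L^2(\widehat e)} \le \widehat C \left( \| \widehat v \|^2_{L^2(\widehat K)} + \| \widehat\nabla \widehat v \|^2_{L^2(\widehat K)} \right) \qquad \forall\, \widehat v \in H^1(\widehat K).
\end{align*}
By density of $C^\infty(\overline{\widehat K})$ in $H^1(\widehat K)$ it suffices to establish \eqref{2.1} for smooth $v$ and pass to the limit, since both sides of \eqref{2.1} are continuous in the $H^1(K)$-norm; alternatively the whole argument can be carried out directly in $H^1$.

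Next I would introduce the affine bijection $F_K:\widehat K\to K$, $F_K(\widehat x)=B_K\widehat x+b_K$, chosen so that $F_K(\widehat e)=e$, and set $\widehat v:=v\circ F_K$. Shape regularity of $\Om_h$ gives $\|B_K\|\lesssim h_K$, $\|B_K^{-1}\|\lesssim h_K^{-1}$, $|\det B_K|\sim h_K^{d}$, and — crucially — $h_e\sim h_K$. Using $\widehat\nabla\widehat v=B_K^{\mathsf T}(\nabla v)\circ F_K$ together with the standard volume and surface Jacobian identities yields
\begin{align*}
\| v \|^2_{L^2(e)} \sim h_e^{\,d-1}\, \| \widehat v \|^2_{L^2(\widehat e)}, \qquad \| \widehat v \|^2_{L^2(\widehat K)} \sim h_e^{\,-d}\, \| v \|^2_{L^2(K)}, \qquad \| \widehat\nabla \widehat v \|^2_{L^2(\widehat K)} \lesssim h_e^{\,-d+2}\, \| \nabla v \|^2_{L^2(K)}.
\end{align*}
Substituting these three relations into the reference trace inequality and multiplying through by $h_e^{\,d-1}$ produces exactly $\| v \|^2_{L^2(e)} \lesssim h_e^{-1}\| v \|^2_{L^2(K)}+h_e\| \nabla v\|^2_{L^2(K)}$.

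I do not expect a genuine obstacle here; this is a textbook estimate and the only points requiring care are (i) invoking shape regularity to guarantee $h_e\sim h_K$, so that the first-order term genuinely carries the weight $h_e$ rather than $h_K^2/h_e$, and (ii) bookkeeping the powers of $h_e$ in the three scaling relations above. As a remark, one may also give a constant-explicit proof that bypasses the reference element: apply the divergence theorem to the field $v^2\psi$ with $\psi(x)=x-x_e$, where $x_e$ is the vertex of $K$ opposite $e$; then $\psi\cdot\mathbf n$ vanishes on the faces through $x_e$ and equals the height of $K$ over $e$ (which is $\gtrsim h_e$ by shape regularity) on $e$, while $\nabla\cdot\psi=d$ and $|\psi|\le h_K$ on $K$, so that a single application of Young's inequality to the resulting volume integral $\int_K(2v\,\nabla v\cdot\psi+d\,v^2)\,dx$ gives \eqref{2.1}.
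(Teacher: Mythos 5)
Your proposal is correct; the paper does not prove this lemma at all but simply cites it as the standard discrete (scaled) trace inequality from Brenner--Scott, and your reference-element scaling argument (with $h_e\sim h_K$ from shape regularity, and ``edge'' understood as a $(d-1)$-dimensional face) is exactly the textbook proof being invoked there. Your alternative divergence-theorem argument with $\psi(x)=x-x_e$ is also a valid, constant-explicit route to the same bound, so nothing further is needed.
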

We define $w_h : [0,T] \rightarrow \V_{h}$ to be the finite element approximation of $w$ which satisfy the following 
	\begin{align}
& (w_{h,t}(t),v_h-w_h(t))+\Bl{w_h(t),v_h-w_h(t)}\geq (f(t),v_h-w_h(t))~~ \forall \ v_h \in \mathcal{K}_{h,\chi}(t), \label{DCVI}\\
&w_h(0)=w_{h,0} \in \V_h. \notag 
\end{align}
For a.e. $t \in (0, T]$, we define the discrete version of $\sigma$ which will be used in the next subsection as
	\begin{align} \label{def:dissig}
(\sigma_h(t),v_h)=(w_{h,t}(t),v_h)+\Bl{w_h(t),v_h}-(f(t),v_h)~~ \forall \ v_h \in \V_h.
\end{align}
	See articles \cite{veeser2001efficient,fierro2003posteriori,nochetto2005fully} 
	for various choices of $\sigma_h$. 
\section{The Elliptic Reconstruction Map for Parabolic VIs} \label{sec4}
Let $t \in [0,T]$, and for any given finite element approximation $w_h \in \V_h$ and $z_h(t):= (f+ \sigma_{h} -w_{h,t})(t) \in L^2(\Omega)$, we introduce the {\it elliptic reconstruction} $\mathcal{W}(t):= \mathcal{R} w_h(t): \V_h \rightarrow \V$ of $w_h(t)$ which satisfies the following elliptic problem:
	\begin{align} \label{def:elrc}
(\nabla\mathcal{W}(t), \nabla v ) =  (z_h(t),v) \quad  \mbox{for all }v \in \V.
\end{align}
From the standard theory of elliptic problems \cite{ciarlet2002finite,brenner2007mathematical}, it is well known that the problem \eqref{def:elrc} has a unique solution. Moreover, the continuous elliptic reconstruction $\mathcal{W}$ and discrete solution $w_h$ satisfy the following orthogonality relation:
\begin{lemma} \label{Gal}
	It holds that
	\begin{align}
	\big(\nabla (\mathcal{W} -w_h)(t), \nabla v_h\big) =0 \quad \mbox{for all }v_h \in \V_h.  \label{galprop}
	\end{align}
	\end{lemma}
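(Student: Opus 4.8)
The plan is to test the defining equation \eqref{def:elrc} of the elliptic reconstruction against a discrete function $v_h \in \V_h \subset \V$, and subtract from it the discrete relation \eqref{def:dissig} that characterizes $w_h$ via the discrete multiplier $\sigma_h$. Since $\V_h \subset \V$, taking $v = v_h$ in \eqref{def:elrc} is legitimate and yields
\begin{align*}
(\nabla \mathcal{W}(t), \nabla v_h) = (z_h(t), v_h) = (f(t) + \sigma_h(t) - w_{h,t}(t), v_h) \quad \text{for all } v_h \in \V_h.
\end{align*}
On the other hand, rearranging \eqref{def:dissig} gives exactly $(\nabla w_h(t), \nabla v_h) = \Bl{w_h(t), v_h} = (f(t) + \sigma_h(t) - w_{h,t}(t), v_h)$ for all $v_h \in \V_h$, using that $a(\cdot,\cdot)$ is precisely the $H^1_0$ bilinear form $\int_\Omega \nabla \cdot \, \cdot \, \nabla \cdot \, dx$. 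Subtracting these two identities, the common right-hand side $(z_h(t), v_h)$ cancels, leaving $(\nabla(\mathcal{W} - w_h)(t), \nabla v_h) = 0$ for all $v_h \in \V_h$, which is \eqref{galprop}.

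There is essentially no obstacle here: the lemma is a direct Galerkin-orthogonality bookkeeping argument, and the only thing to be careful about is the matching of the two right-hand sides. The one point worth stating explicitly is that $z_h(t)$ is defined to be exactly $(f + \sigma_h - w_{h,t})(t)$, which is the same combination that appears when \eqref{def:dissig} is written with $a(w_h, v_h)$ isolated on the left; this is by design, and it is what makes the cancellation work. It is also worth noting that $z_h(t) \in L^2(\Omega)$ (since $w_h(t) \in \V_h$, $w_{h,t}(t) \in \V_h$, $\sigma_h(t) \in \V_h$ by \eqref{def:dissig}, and $f(t) \in L^2(\Omega)$), so the elliptic problem \eqref{def:elrc} is well posed for a.e. $t$, as already asserted before the statement. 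No regularity beyond what is assumed is needed, and the identity holds for the same set of times for which \eqref{def:dissig} holds, i.e. a.e. $t \in (0,T]$ (and at $t = 0$ whenever $w_{h,t}(0)$ is defined).
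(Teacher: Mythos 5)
Your proposal is correct and follows essentially the same route as the paper: test \eqref{def:elrc} with $v = v_h \in \V_h \subset \V$, use \eqref{def:dissig} rearranged so that $a(w_h(t),v_h)$ equals the same right-hand side $(f+\sigma_h-w_{h,t},v_h)$, and subtract so the terms cancel. The additional well-posedness remarks you include are consistent with what the paper asserts before the lemma and do not change the argument.
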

\begin{proof}
	Let $t \in [0,T]$ and $v_h \in \V_h$, then from equation $\eqref{def:dissig}$, it holds that
		\begin{align}
		&\big(\nabla \mathcal{W}(t), \nabla v_h\big) - \big( \nabla w_h(t), \nabla v_h \big) \notag \\ & = \big( f + \sigma_{h} -w_{h,t}, v_h\big) + \big(  w_{h,t} -f -\sigma_{h}, v_h \big) = 0. \notag
		\end{align}
	Hence, we have the proof of \eqref{galprop}.
	\end{proof}

\section{A Posteriori Error Analysis} \label{sec5}
	
This section provides the a posteriori error analysis in an abstract setting. As a starting point,
we introduce a lemma which provides an inequality for the difference between elliptic reconstruction $\mathcal{W}$ in \eqref{def:elrc} and $w$ solving \eqref{CVI}.		
	
\begin{lemma} \label{pointerror}
	Let $\mathcal{A} = -\Delta$ be the self adjoint operator, then for $0 < t \le T$ and $\forall~ v \in \V$, the following error equation holds in the the sense of distributions 
\begin{align}
(\mathcal{W}-w)_t + \mathcal{A} (\mathcal{W}-w) \leq \sigma_h + (\mathcal{W}-w_h)_t. \label{2}
\end{align}
Here $\mathcal{W}$, $w$ and $w_h$ respectively solve \eqref{def:elrc}, \eqref{CVI} and \eqref{DCVI}. 
\end{lemma}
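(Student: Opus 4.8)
The plan is to write both the continuous and discrete problems as equations using their respective Lagrange multipliers, apply the elliptic reconstruction to turn the discrete equation into one for $\mathcal{W}$, and then subtract. From \eqref{def:sigma} together with Remark \ref{rem:sign}, the continuous solution satisfies, in the sense of distributions, $w_t + \mathcal{A} w = f + \sigma$ with $\sigma \ge 0$ a.e. in $Q$. From the definition of the elliptic reconstruction \eqref{def:elrc}, the function $\mathcal{W}$ satisfies $\mathcal{A}\mathcal{W} = z_h = f + \sigma_h - w_{h,t}$ in the sense of distributions, i.e. $\mathcal{A}\mathcal{W} + w_{h,t} = f + \sigma_h$. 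First I would differentiate this last relation (formally, in the distributional sense in time) to obtain an expression involving $\mathcal{W}_t$, or — cleaner — I would add and subtract $\mathcal{W}_t$ without differentiating: write $(\mathcal{W})_t + \mathcal{A}\mathcal{W} = f + \sigma_h - w_{h,t} + \mathcal{W}_t$, where the identity $\mathcal{A}\mathcal{W} = f + \sigma_h - w_{h,t}$ is simply substituted on the left after adding $\mathcal{W}_t$ to both sides.

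Next I would subtract the continuous equation $w_t + \mathcal{A} w = f + \sigma$ from this. The data term $f$ cancels, and one is left with
\begin{align*}
(\mathcal{W}-w)_t + \mathcal{A}(\mathcal{W}-w) = \sigma_h - \sigma + (\mathcal{W} - w_h)_t .
\end{align*}
Since $\sigma \ge 0$ a.e. in $Q$ (this is exactly the regularity point isolated at the end of Remark \ref{rem:sign}), we have $\sigma_h - \sigma \le \sigma_h$, and therefore
\begin{align*}
(\mathcal{W}-w)_t + \mathcal{A}(\mathcal{W}-w) \le \sigma_h + (\mathcal{W} - w_h)_t ,
\end{align*}
which is \eqref{2}. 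Testing against an arbitrary $v \in \V$ and integrating by parts on $\mathcal{A}$ makes the ``sense of distributions'' statement precise; the inequality is to be read as holding after pairing with nonnegative test functions, consistent with how $\sigma \ge 0$ is used.

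The main obstacle is justifying the manipulations rigorously rather than formally: one must ensure $\mathcal{W}_t$ makes sense (differentiating \eqref{def:elrc} in time requires $w_{h,t}$ and $\sigma_h$ to be differentiable in $t$, or one works with the weak time derivative in $\V^*$ and uses that $\mathcal{A}$ commutes with $\partial_t$ in the distributional sense) and that the identity $\mathcal{A}\mathcal{W} = f + \sigma_h - w_{h,t}$, which holds as an equation in $\V^*$ (or in $L^2(\Omega)$ using the extra regularity of Remark \ref{rem:sign}), can be added to the continuous equation, which a priori only holds in $\V^*$. The key enabling fact is the $L^2$-regularity of $\sigma$ from Remark \ref{rem:sign}, which upgrades the sign condition $\langle \sigma(t), \phi\rangle_{-1,1}\ge 0$ for $\phi \ge 0$ to the pointwise statement $\sigma \ge 0$ a.e. in $Q$, and it is precisely this pointwise sign that produces the inequality in \eqref{2}. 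I would also remark that the discrete multiplier $\sigma_h$ need not have a sign in $\V$, so it genuinely remains on the right-hand side as an estimator term rather than being absorbed.
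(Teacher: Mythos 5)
Your argument is correct and is essentially the paper's own proof: both substitute \eqref{def:sigma} for the continuous problem and \eqref{def:elrc} for the reconstruction, regroup the terms $\mathcal{W}_t - w_{h,t}$ into $(\mathcal{W}-w_h)_t$ to obtain the identity $(\mathcal{W}-w)_t + \mathcal{A}(\mathcal{W}-w) + \sigma = \sigma_h + (\mathcal{W}-w_h)_t$, and then drop $\sigma$ using the sign property from Remark \ref{rem:sign}; the paper merely carries this out tested against $v \in \V$ in the duality pairing before passing to the distributional statement, which is exactly the precision step you indicate at the end.
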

\begin{proof}
For any $v \in \V$, we consider
\begin{align}\label{eq:diffWw}
	&	\langle (\mathcal{W}-w)_t, v \rangle_{-1,1} + \big( \nabla (\mathcal{W}-w), \nabla v\big) \notag
	\\ & = \langle \mathcal{W}_t, v \rangle_{-1,1} - \langle w_t, v \rangle_{-1,1} -\big( \nabla w, \nabla v\big) + \big( \nabla \mathcal{W}, \nabla v\big) \notag
	\\&  = \langle \mathcal{W}_t, v \rangle_{-1,1} -\Ian{\sigma(t),v}_{-1,1}-(f,v)+ \big( \nabla \mathcal{W}, \nabla v\big) ,
\end{align}
where in the last equality, we have used $w(t)$ from \eqref{def:sigma}. Subsequently, using
the definition of $\mathcal{W}$ from \eqref{def:elrc} in \eqref{eq:diffWw}, we obtain that
\begin{align*}	
	&	\langle (\mathcal{W}-w)_t, v \rangle_{-1,1} + \big( \nabla (\mathcal{W}-w), \nabla v\big)
	\\ &  = \langle \mathcal{W}_t, v \rangle_{-1,1} -\Ian{\sigma(t),v}_{-1,1}-(f,v) +(f+ \sigma_{h} -w_{h,t},v) \\
	& = -\langle \sigma, v \rangle_{-1,1} + \langle \sigma_h, v \rangle_{-1,1} + \langle (\mathcal{W}-w_h)_t, v  \rangle_{-1,1}.
\end{align*}
Hence, we have the following estimate for any $v \in \mathcal{V}$:
	\begin{align}
\langle (\mathcal{W}-w)_t,  v \rangle_{-1,1} + \big( \nabla (\mathcal{W}-w), \nabla v\big) + & \langle \sigma, v \rangle_{-1,1} \\  \notag & = \langle \sigma_h, v \rangle_{-1,1} + \langle (\mathcal{W}-w_h)_t, v  \rangle_{-1,1}. \notag
\end{align}
In particular, in the sense of distributions, we have 
\begin{align}
(\mathcal{W}-w)_t + \mathcal{A} (\mathcal{W}-w) + \sigma = \sigma_h + (\mathcal{W}-w_h)_t. \label{kprop}
\end{align}
Using the key sign property $\sigma \geq 0$ (cf.~Remark~\ref{rem:sign}), 
we conclude \eqref{2}.
\end{proof}
\subsection{Energy norm error estimates}
The next result follows immediately from Lemma \ref{pointerror}.
\begin{lemma} \label{lemma1}
	Let $\mathcal{W}$ and $w_h$ be the continuous elliptic reconstruction and discrete solution which satisfy \eqref{def:elrc} and \eqref{DCVI}, respectively. Then, the following bound holds for $t \in (0,T]$
	\begin{align}
	\frac{1}{2} \|&(\mathcal{W}-w)(t)\|^2_{L^2(\Om)}   + \frac{1}{2} \int_{0}^{t} \|(\mathcal{W}-w)(s)\|^2_{\V} ds \notag \leq \int_0^t \|\sigma_h\|_{\V^*} \|\mathcal{W}-w_h\|_{\V} \\ 
	& + \Big| \int_{0}^{t}   \langle \sigma_{h} , w_h - \chi \rangle_{-1,1} \Big|  +\int_0^t \|\sigma^-_{h}(s)\|^2_{\V^*} ds +  \int_{0}^{t} \|(w_h-\mathcal{W})_t(s) \|^2_{\V^*}  ds  \notag \\ 
	& \qquad \qquad + \Big|\int_{0}^{t}  \langle\sigma_{h}^- ,  \mathcal{W} - \chi \rangle_{-1,1} \Big|  + 	\frac{1}{2} \|\mathcal{W}(0)-w(0)\|^2_{L^2(\Om)}.  \label{key4}
	\end{align}
\end{lemma}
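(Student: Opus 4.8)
The plan is to deduce \eqref{key4} directly from the error identity \eqref{kprop} obtained inside the proof of Lemma~\ref{pointerror}, by testing it with the admissible choice $v=(\mathcal{W}-w)(t)\in\V$. Using the Lions--Magenes identity $\langle(\mathcal{W}-w)_t,\mathcal{W}-w\rangle=\tfrac12\tfrac{d}{dt}\|(\mathcal{W}-w)(t)\|_{L^2(\Om)}^2$ (legitimate given the regularity $\mathcal{W}-w\in L^2(0,T;\V)$ with $(\mathcal{W}-w)_t\in L^2(0,T;\V^*)$) and $\big(\nabla(\mathcal{W}-w),\nabla(\mathcal{W}-w)\big)=\|(\mathcal{W}-w)(t)\|_{\V}^2$, integration in time over $(0,t)$ yields
\begin{align*}
&\tfrac12\|(\mathcal{W}-w)(t)\|_{L^2(\Om)}^2+\int_0^t\|(\mathcal{W}-w)(s)\|_{\V}^2\,ds+\int_0^t\langle\sigma,\mathcal{W}-w\rangle\,ds\\
&\qquad=\tfrac12\|(\mathcal{W}-w)(0)\|_{L^2(\Om)}^2+\int_0^t\langle\sigma_h,\mathcal{W}-w\rangle\,ds+\int_0^t\langle(\mathcal{W}-w_h)_t,\mathcal{W}-w\rangle\,ds.
\end{align*}

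The step I expect to be the main obstacle, and the one genuinely new ingredient relative to the linear parabolic theory of Makridakis--Nochetto, is the disposal of the continuous multiplier term $\int_0^t\langle\sigma,\mathcal{W}-w\rangle\,ds$ sitting on the left. Here the plan is to write $\mathcal{W}-w=(\mathcal{W}-\chi)-(w-\chi)$ and invoke the sign property $\sigma\ge 0$ a.e.\ in $Q$ (Remark~\ref{rem:sign}) together with the complementarity relation $\langle\sigma,w-\chi\rangle=0$ (Remark~\ref{rem:comp}): this collapses the term to $\int_0^t\langle\sigma,\mathcal{W}-\chi\rangle\,ds$, which is then argued to be nonnegative, so that it may simply be dropped. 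This is precisely where the $L^2$-regularity of $\sigma$ recorded at the end of Remark~\ref{rem:sign} is used, and it is the one nontrivial estimate in the proof.

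Everything else is bookkeeping with Cauchy--Schwarz and Young's inequalities. For the source term I would use the decomposition $\mathcal{W}-w=(\mathcal{W}-w_h)+(w_h-\chi)-(w-\chi)$, so that $\int_0^t\langle\sigma_h,\mathcal{W}-w\rangle\,ds$ splits into $\int_0^t\langle\sigma_h,\mathcal{W}-w_h\rangle\,ds+\int_0^t\langle\sigma_h,w_h-\chi\rangle\,ds-\int_0^t\langle\sigma_h,w-\chi\rangle\,ds$. The first is bounded by $\int_0^t\|\sigma_h\|_{\V^*}\|\mathcal{W}-w_h\|_{\V}\,ds$ via Cauchy--Schwarz for the duality pairing (the first estimator term); the second is kept as the discrete complementarity term $\left|\int_0^t\langle\sigma_h,w_h-\chi\rangle\,ds\right|$. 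For the third I would split $\sigma_h=\sigma_h^+-\sigma_h^-$: the contribution $-\langle\sigma_h^+,w-\chi\rangle\le 0$ (because $\sigma_h^+\ge 0$ and $w-\chi\ge 0$) is discarded, while $\langle\sigma_h^-,w-\chi\rangle=\langle\sigma_h^-,w-\mathcal{W}\rangle+\langle\sigma_h^-,\mathcal{W}-\chi\rangle$; the first piece is controlled by Young's inequality by $\int_0^t\|\sigma_h^-\|_{\V^*}^2\,ds+\tfrac14\int_0^t\|\mathcal{W}-w\|_{\V}^2\,ds$ and the second is kept as $\left|\int_0^t\langle\sigma_h^-,\mathcal{W}-\chi\rangle\,ds\right|$. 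Finally $\int_0^t\langle(\mathcal{W}-w_h)_t,\mathcal{W}-w\rangle\,ds\le\int_0^t\|(w_h-\mathcal{W})_t\|_{\V^*}^2\,ds+\tfrac14\int_0^t\|\mathcal{W}-w\|_{\V}^2\,ds$ by Young's inequality. Inserting these bounds, absorbing the two $\tfrac14\int_0^t\|\mathcal{W}-w\|_{\V}^2\,ds$ terms into $\int_0^t\|\mathcal{W}-w\|_{\V}^2\,ds$ on the left (which leaves the factor $\tfrac12$ displayed in \eqref{key4}), and identifying $\tfrac12\|(\mathcal{W}-w)(0)\|_{L^2(\Om)}^2=\tfrac12\|\mathcal{W}(0)-w(0)\|_{L^2(\Om)}^2$, one obtains \eqref{key4}; since only the multiplier estimate of the previous paragraph is not purely a Cauchy--Schwarz or Young step, the result ``follows immediately'' from Lemma~\ref{pointerror} as stated in the text.
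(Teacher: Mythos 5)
Your proposal retraces the paper's own proof almost step for step in its mechanical part: the same test function $v=\mathcal{W}-w$, the same splitting $\langle\sigma_h,\mathcal{W}-w\rangle_{-1,1}=\langle\sigma_h,\mathcal{W}-w_h\rangle_{-1,1}+\langle\sigma_h,w_h-\chi\rangle_{-1,1}+\langle\sigma_h,\chi-w\rangle_{-1,1}$, the same decomposition $\sigma_h=\sigma_h^+-\sigma_h^-$ with the signs $\sigma_h^+\ge 0$, $w\ge\chi$, the same Young constants producing two terms $\tfrac14\int_0^t\|\mathcal{W}-w\|_{\V}^2$ that are absorbed into the left-hand side, and the same identification of the initial term. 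The only divergence is where the continuous multiplier is removed: the paper discards it once and for all in Lemma~\ref{pointerror}, using $\sigma\ge0$ to pass from \eqref{kprop} to \eqref{2}, and then tests the resulting inequality with $\mathcal{W}-w$; you instead keep the identity \eqref{kprop}, test it, and dispose of $\int_0^t\langle\sigma,\mathcal{W}-w\rangle_{-1,1}$ by writing it, via the complementarity of Remark~\ref{rem:comp}, as $\int_0^t\langle\sigma,\mathcal{W}-\chi\rangle_{-1,1}$ and asserting that this is nonnegative.

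That assertion is the one step you flag as nontrivial but never actually prove, and as stated it is a gap: the elliptic reconstruction $\mathcal{W}$ solves the \emph{unconstrained} problem \eqref{def:elrc}, so there is no reason why $\mathcal{W}\ge\chi$ should hold, and $\sigma\ge 0$ alone does not determine the sign of $\langle\sigma,\mathcal{W}-\chi\rangle_{-1,1}$. To be fair, this is not a defect relative to the paper but a more explicit rendering of the same step: testing the distributional inequality \eqref{2} with the sign-indefinite function $\mathcal{W}-w$, as the paper does, amounts (using $\sigma\in L^2$ and $\sigma(w-\chi)=0$ a.e.) to silently dropping exactly the quantity $\int_0^t\langle\sigma,\mathcal{W}-\chi\rangle_{-1,1}$ that you isolate, so the two arguments stand or fall together. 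If you want your version to be airtight you must either supply a proof that $\sigma(\mathcal{W}-\chi)\ge 0$ on the contact set (e.g.\ by establishing $\mathcal{W}\ge\chi$ there, which is not available in general), or retain that term on the right-hand side, which would alter the estimator since $\sigma$ is not computable; simply saying it ``is then argued to be nonnegative'' does not close the argument.
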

\begin{remark}
\rm 
	All the terms on the right hand side are expected to go to zero. In particular, 
	the second term will go to zero because the limiting multiplier fulfills the complementarity
	condition $\langle \sigma, \chi - w \rangle_{-1,1} = 0$, see Remark~\ref{rem:comp}. 
	Moreover, the third and fifth terms 
	are expected to vanish because the continuous multiplier $\sigma \ge 0$. This for instance
	immediately follows for the continuous piecewise linear finite elements 
	\cite{veeser2001efficient,nochetto2003pointwise}. \\
	Additionally, these terms also accounts for the variational inequality solver accuracy.
	Indeed, they are expected to be small once the inequality has been solved. 
\end{remark}
\begin{proof}
We multiply \eqref{2} by a test function $v:=\mathcal{W}-w\in \V $ to arrive at 
\begin{align}
 & \langle (\mathcal{W}-w)_t ,  v \rangle_{-1,1}  + (\nabla (\mathcal{W}-w), \nabla v) \leq \langle \sigma_{h} , v \rangle_{-1,1} + \langle (\mathcal{W}-w_h)_t, v \rangle_{-1,1} \notag \\ & \implies  \langle (\mathcal{W}-w)_t , \mathcal{W} -w\rangle_{-1,1} + a(\mathcal{W}-w, \mathcal{W}-w) \notag \\   &\hspace*{3cm} \leq \langle \sigma_{h} , \mathcal{W}-w \rangle_{-1,1} + \langle (\mathcal{W}-w_h)_t, \mathcal{W}-w \rangle_{-1,1} \notag \\ & \implies \frac{1}{2} \frac{d}{dt} \|\mathcal{W}-w\|_{L^2(\Omega)}^2 + a(\mathcal{W}-w,\mathcal{W}-w) \notag \leq  \\   &\hspace*{3cm} \langle \sigma_{h} , \mathcal{W}-w \rangle_{-1,1} + \langle (\mathcal{W}-w_h)_t, \mathcal{W}-w \rangle_{-1,1} \, . \label{key5} \\ & \implies \frac{1}{2} \frac{d}{dt} \|\mathcal{W}-w\|_{L^2(\Omega)}^2 + a(\mathcal{W}-w,\mathcal{W}-w) \notag \leq \langle \sigma_{h} , \mathcal{W}-w_h \rangle_{-1,1}  \\   &\hspace*{3cm} + \langle \sigma_{h} , w_h - \chi \rangle_{-1,1} + \langle \sigma_{h} , \chi - w \rangle_{-1,1}  + \langle (\mathcal{W}-w_h)_t, \mathcal{W}-w \rangle_{-1,1} \, . \label{key6}
\end{align}
Using the properties of $-\Delta$ operator, it holds that $$a(v, v)^{\frac{1}{2}}:= \|v\|_{\V} \quad \forall~ v \in \V$$ and for $t \in (0, T]$, we integrate \eqref{key6} on both sides from $0$ to $t$ 
\begin{align}
& \frac{1}{2}\big[\|\mathcal{W}(t)-w(t)\|^2_{L^2(\Om)} -\|\mathcal{W}(0)-w(0)\|^2_{L^2(\Om)} \big]  + \int_0^t \|\mathcal{W}-w\|^2_{\V}\notag \\ & \hspace*{2cm} \leq   \int_{0}^{t}  \langle \sigma_{h} , \mathcal{W}-w_h \rangle_{-1,1} + \int_{0}^{t} \langle \sigma_{h} , w_h - \chi \rangle_{-1,1} +\int_{0}^{t} \langle \sigma_{h} , \chi -w  \rangle_{-1,1} \notag \\  & \qquad \qquad \qquad \qquad + \int_0^t \langle (\mathcal{W}-w_h)_t, \mathcal{W} -w\rangle_{-1,1}  \\ & \implies \frac{1}{2} \|\mathcal{W}(t)-w(t)\|^2_{L^2(\Om)} + \int_0^t \|\mathcal{W}-w\|^2_{\V} \leq   \underbrace{\int_{0}^{t}  \langle \sigma_{h} , \mathcal{W}-w_h \rangle_{-1,1}}_\text{Term 1}  \notag \\ &  
\qquad + \int_{0}^{t}  \langle \sigma_{h} , w_h - \chi \rangle_{-1,1} + \underbrace{\int_{0}^{t}  \langle \sigma_{h} ,  \chi - w \rangle_{-1,1}}_\text{Term 2} +  \underbrace{\int_0^t \langle (\mathcal{W}-w_h)_t, \mathcal{W} -w\rangle_{-1,1} }_\text{Term 3} \\ &  \qquad \qquad \qquad + \frac{1}{2} \|\mathcal{W}(0)-w(0)\|^2_{L^2(\Om)} \, .  \label{key3}
\end{align}
Using $v^+:= \max\{v,0\}$, $v^-:= \max\{-v,0\}$, Cauchy Schwarz inequality and the following generalized Young's inequality
\begin{align}
ab \leq \frac{a^2}{2 \epsilon} + \frac{\epsilon}{2} b^2 \quad \forall a, b \in \mathbb{R} \label{young},
\end{align}
we derive the following bounds 
\begin{align}
\mbox{Term 1} & \leq \int_0^t \|\sigma_h\|_{\V^*} \|\mathcal{W}-w_h\|_{\V}. \label{key1}
\end{align}
\begin{align}
\mbox{Term 2} & = \int_{0}^{t}  \langle \sigma_{h} ,  \chi - w \rangle_{-1,1} \notag \\ & = \int_{0}^{t}  \langle \sigma_{h}^+ - \sigma_{h}^- ,  \chi - w \rangle_{-1,1}  \leq \int_{0}^{t}  \langle\sigma_{h}^- ,  w - \chi \rangle_{-1,1} \quad (\mbox{using } \sigma_{h}^+ \geq 0 \mbox{ and }w \geq \chi) \notag \\ & = \int_{0}^{t}  \langle \sigma_{h}^- ,  w -\mathcal{W} \rangle_{-1,1} + \int_{0}^{t}  \langle\sigma_{h}^- ,  \mathcal{W} - \chi \rangle_{-1,1} \notag \\ & \leq \int_0^t \|\sigma_{h}^-\|_{\V^*} \|\mathcal{W}-w\|_{\V} +\int_{0}^{t}  \langle\sigma_{h}^- ,  \mathcal{W} - \chi \rangle_{-1,1} \notag \\ &  \leq \int_0^t \|\sigma_{h}^-\|^2_{\V^*} +  \frac{1}{4} \int_0^t \|\mathcal{W}-w\|^2_{\V} +\int_{0}^{t}  \langle\sigma_{h}^- ,  \mathcal{W} - \chi \rangle_{-1,1}  \quad (\epsilon =2 \mbox{ in } \eqref{young}). \label{key8}
\end{align}
\begin{align}
\mbox{Term 3} & \leq \int_0^t\|(\mathcal{W}-w_h)_t\|_{\V^*} \|\mathcal{W}-w\|_{\V} \notag \\ & \leq \int_0^t \|(\mathcal{W}-w_h)_t\|^2_{\V^*} +  \frac{1}{4} \int_0^t \|\mathcal{W}-w\|^2_{\V} \quad (\epsilon =2 \mbox{ in } \eqref{young}). \label{key2}
\end{align}
We use \eqref{key1} \eqref{key8} and \eqref{key2} in \eqref{key3} to have the desired estimate \eqref{key4}.
	\end{proof}
Next, we introduce $\mathcal{P}_h : \V \rightarrow \V_h$ to be the elliptic projection operator, i.e.,
\begin{align} \label{def:ellproj}
a(\mathcal{P}_hw, v_h)=a(w,v_h) \quad \mbox{for all }v_h \in \V_h.
\end{align}
For a given $g \in L^2(\Omega)$, let us suppose $u \in \V $ and $u_h \in \V_h$ satisfy the following elliptic equations, respectively
\begin{align}
a(u,v)  &=  (g, v) \quad \mbox{for all }v \in \V, \label{cts}\\ 	a(u_h,v_h) &= (g, v_h)  \quad \mbox{for all }v_h \in \V_h. \label{disc}
\end{align}
From \eqref{cts} and \eqref{disc}, we note that $a(u-u_h,v_h) =0 \quad \forall v_h \in \V_h \implies u_h=\mathcal{P}_h u$. In addition, for a given positive constant $C$, the following known a posteriori error estimate holds \cite{verfurth1996review,MR2249676}
\begin{align} 
\|u-u_h\|_{\mathcal{X}} &\leq C \eta(u_h,g:\mathcal{X}) \label{eq11} 
\end{align}
where 
$C$ 
is a constant which depend only on the properties of an elliptic a posteriori error estimator and $\eta:=\eta(u_h,g:\mathcal{X})$ is the computable a posteriori error estimator which depends only on the space $\mathcal{X}:= L^2(\Omega), \V$ or $\V^*$ and the given data $u_h$ and $g$. Next, we note that $\mathcal{P}_h\mathcal{W}=w_h$ using Lemma \ref{Gal}. Therefore, using similar ideas, we have the following a posteriori error estimate 
\begin{align}
\|(w_h - \mathcal{W})(t)\|_{L^2(\Omega)} & \leq C \eta(w_h,z_h:L^2(\Omega))  \label{eq10} \\\|(w_h - \mathcal{W})(t)\|_{\V} & \leq C \eta(w_h,z_h:\V) \, .  \label{eq13} 
\end{align}
We differentiate \eqref{def:elrc} with respect to $t$ to get the following
\begin{align} \label{def1}
a(\mathcal{W}_t,v) =  (z_{h,t},v) \quad  \mbox{for all }v \in \V.
\end{align}
Using \eqref{galprop}, \eqref{def:ellproj}, and \eqref{eq10}, there holds 
 \begin{align} 
\mathcal{P}_h\mathcal{W}_t=w_{h,t} \, , & \quad \|(w_h - \mathcal{W})_t\|_{\V^*} \leq C \eta(w_{h,t},z_{h,t}:\V^*) \, . 
\label{eq2} 
\end{align}

We are now ready to state the main a posteriori result of this section.
\begin{theorem}(\textbf{Abstract a posteriori error estimate in energy norm}) \label{main1}
	Let $w$ and $w_h$ be the continuous and discrete solutions which satisfy \eqref{CVI} and \eqref{DCVI}, respectively. Then, for a positive constant $C$, the following a posteriori error estimate hold for $0 < t \le T$:
		\begin{align}
		\frac{1}{2} \|(w-w_h)(t)\|_{L^2(\Om)} 
		& \leq  \bigg(\int_{0}^{t} \|\sigma^-_{h}(s) \|^2_{\V^*}  ds \bigg)^{\frac{1}{2}} 
		+ \bigg( \Big| \int_{0}^{t}  \langle \sigma_{h} , w_h - \chi \rangle_{-1,1} \Big| \bigg)^{\frac{1}{2}}  \notag \\ 
		& \quad
		+ \bigg( \Big| \int_{0}^{t} \langle\sigma_{h}^- ,  \mathcal{W} - \chi \rangle_{-1,1} \Big|   \bigg)^{\frac{1}{2}}	
		+ \bigg(\int_0^t \|\sigma_h\|_{\V^*} \eta(w_{h},z_{h}:\V) \bigg)^{\frac{1}{2}} 	 \notag \\ 
		& \quad
		+ \frac{1}{2}\eta(w_h,z_h:L^2(\Omega))		
		+ \bigg(\int_{0}^{t} \eta(w_{h,t},z_{h,t}:\V^*)^2  ds \bigg)^{\frac{1}{2}}	\notag \\
		& \quad	
		+ \frac{1}{2} \|w_h(0)-w_0\|_{L^2(\Om)}  + \frac{1}{2} \eta(w_h(0), z_h(0):L^2(\Omega)). \label{eq}
	\end{align}
\end{theorem}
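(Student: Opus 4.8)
The plan is to glue together the energy identity of Lemma~\ref{lemma1} with the elliptic a posteriori estimates \eqref{eq10}, \eqref{eq13} and \eqref{eq2}, using only the triangle inequality and the subadditivity of the square root; there is no new analytic ingredient beyond Lemmas~\ref{pointerror}--\ref{lemma1} and the Galerkin orthogonality of Lemma~\ref{Gal}.

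First I would split the error through the elliptic reconstruction,
\[
\tfrac12\|(w-w_h)(t)\|_{L^2(\Om)} \le \tfrac12\|(w-\mathcal{W})(t)\|_{L^2(\Om)} + \tfrac12\|(\mathcal{W}-w_h)(t)\|_{L^2(\Om)},
\]
and handle the two summands separately. The second is already computable: by Lemma~\ref{Gal}, $\mathcal{P}_h\mathcal{W}=w_h$, so $w_h$ is the Galerkin approximation of the elliptic reconstruction and \eqref{eq10} applies directly, contributing the term $\tfrac12\eta(w_h,z_h:L^2(\Om))$ (the estimator constant being absorbed into $\eta$, consistent with the constant $C$ announced in the statement). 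For the first summand I would discard the nonnegative in-time term $\tfrac12\int_0^t\|(\mathcal{W}-w)(s)\|_\V^2\,ds$ on the left of \eqref{key4} — this is what reduces the energy bound to the target $L^\infty(0,T;L^2(\Om))$ bound — keep only $\tfrac12\|(\mathcal{W}-w)(t)\|^2_{L^2(\Om)}$, take square roots, and use $\sqrt{\sum_i a_i}\le\sum_i\sqrt{a_i}$ to distribute the bound over the six terms on the right-hand side of \eqref{key4}.

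It then remains to post-process those six pieces. The two complementarity-type terms $\big|\int_0^t\langle\sigma_{h},w_h-\chi\rangle_{-1,1}\big|$, $\big|\int_0^t\langle\sigma_{h}^-,\mathcal{W}-\chi\rangle_{-1,1}\big|$ and the term $\int_0^t\|\sigma_{h}^-\|_{\V^*}^2\,ds$ pass through under the square root unchanged, producing three of the terms in \eqref{eq}. In Term~1, $\int_0^t\|\sigma_h\|_{\V^*}\|\mathcal{W}-w_h\|_\V$, I would replace $\|\mathcal{W}-w_h\|_\V$ by $\eta(w_h,z_h:\V)$ using \eqref{eq13}; in Term~4 (the $\|(\mathcal{W}-w_h)_t\|_{\V^*}^2$ integral) I would replace $\|(\mathcal{W}-w_h)_t\|_{\V^*}$ by $\eta(w_{h,t},z_{h,t}:\V^*)$ using \eqref{eq2}; and for the initial-datum term I would split once more, $\|\mathcal{W}(0)-w(0)\|_{L^2(\Om)}\le\|\mathcal{W}(0)-w_h(0)\|_{L^2(\Om)}+\|w_h(0)-w_0\|_{L^2(\Om)}$, invoking $w(0)=w_0$ from \eqref{CVI1} and bounding the first summand by $\eta(w_h(0),z_h(0):L^2(\Om))$ via \eqref{eq10} at $t=0$. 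Collecting the pieces yields \eqref{eq}.

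I do not expect a genuine obstacle; the work is organizational. The points that need a little care are: (i) deciding to drop, rather than exploit, the $\V$-in-time term of \eqref{key4}, since the target is an $L^\infty$-in-time $L^2$ estimate; (ii) the constant bookkeeping when pulling the square root through the weighted sum — the harmless inequalities $\tfrac1{\sqrt2}\le1$ and $\sqrt{a+b}\le\sqrt a+\sqrt b$, together with absorbing the elliptic-estimator constants into $C$/$\eta$, are exactly what convert the $\tfrac12$-weighted $L^2$ and squared $L^2$ terms of \eqref{key4} into the $\tfrac12\eta(\cdot)$ and $\tfrac12\|w_h(0)-w_0\|_{L^2(\Om)}$ terms of \eqref{eq}; and (iii) the term $\langle\sigma_{h}^-,\mathcal{W}-\chi\rangle_{-1,1}$ is deliberately retained, as it still involves $\mathcal{W}$ — this is an \emph{abstract} estimate whose right-hand side is made fully explicit only in Section~\ref{sec6}.
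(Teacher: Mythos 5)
Your proposal is correct and follows essentially the same route as the paper: split the error through $\mathcal{W}$ by the triangle inequality, bound $\|(w_h-\mathcal{W})(t)\|_{L^2(\Om)}$ by \eqref{eq10}, drop the nonnegative energy integral in \eqref{key4}, distribute the square root over the remaining terms, substitute \eqref{eq13} and \eqref{eq2}, and split the initial term via $w(0)=w_0$ and \eqref{eq10} at $t=0$. Your explicit handling of the $1/\sqrt{2}$ bookkeeping matches what the paper does implicitly.
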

\begin{proof}
	Let us consider
\begin{align}
\frac{1}{2}\|(w_h-w)(t)\|_{L^2(\Om)} &\leq \frac{1}{2} \|(w_h- \mathcal{W})(t)\|_{L^2(\Om)} + \frac{1}{2}\| (\mathcal{W}- w)(t)\|_{L^2(\Om)} \notag \\ 
& \hspace*{-3cm}\leq \frac{1}{2} \|(w_h- \mathcal{W})(t)\|_{L^2(\Om)} + \bigg(\int_0^t \|\sigma_h\|_{\V^*} \|\mathcal{W}-w_h\|_{\V}\bigg)^{\frac{1}{2}}  + \bigg(\int_{0}^{t} \|\sigma^-_{h}(s) \|^2_{\V^*}  ds \bigg)^{\frac{1}{2}}  \notag \\ 
&\hspace*{-3cm}  + \bigg( \Big| \int_{0}^{t} \langle \sigma_{h} , w_h - \chi \rangle_{-1,1}\Big| \bigg)^{\frac{1}{2}}  +  \bigg( \int_{0}^{t} \|(w_h-\mathcal{W})_t(s) \|^2_{\V^*}  ds \bigg)^{\frac{1}{2}} + 	\frac{1}{2} \|\mathcal{W}(0)-w(0)\|_{L^2(\Om)} \notag \\ 
& + \bigg( \Big| \int_{0}^{t}  \langle\sigma_{h}^- ,  \mathcal{W} - \chi \rangle_{-1,1} \Big| \bigg)^{\frac{1}{2}}  \label{eq1}
\end{align}
where we used Lemma \ref{lemma1} in the second step. In addition, using bounds \eqref{eq10}, \eqref{eq13} and \eqref{eq2}, we have 
\begin{align}
&\frac{1}{2}\|(w_h-w)(t)\|_{L^2(\Om)}  \leq \frac{1}{2} \|(w_h- \mathcal{W})(t)\|_{L^2(\Om)} + \bigg(\int_{0}^{t} \|\sigma^-_{h}(s) \|^2_{\V^*}  ds \bigg)^{\frac{1}{2}} \notag  \\ 
& \hspace*{0.5cm}+  \bigg(\int_0^t \|\sigma_h\|_{\V^*} \eta(w_{h},z_{h}:\V) \bigg)^{\frac{1}{2}} 
+ \bigg( \Big| \int_{0}^{t} \langle \sigma_{h} , w_h - \chi \rangle_{-1,1} \Big| \bigg)^{\frac{1}{2}} 
+ \bigg( \int_{0}^{t} \|(w_h-\mathcal{W})_t(s) \|^2_{\V^*}  ds \bigg)^{\frac{1}{2}} \notag \\ 
& \hspace*{0.5cm}+ \frac{1}{2} \|\mathcal{W}(0)-w_h(0)\|_{L^2(\Om)} + \frac{1}{2} \|w_h(0)-w(0)\|_{L^2(\Om)} 
+ \bigg( \Big| \int_{0}^{t} \langle\sigma_{h}^- ,  \mathcal{W} - \chi \rangle_{-1,1} \Big|  \bigg)^{\frac{1}{2}}\notag \\ 
& \leq \frac{1}{2}\eta(w_h,z_h:L^2(\Omega)) 
+ \bigg(\int_{0}^{t} \|\sigma^-_{h}(s) \|^2_{\V^*}  ds \bigg)^{\frac{1}{2}} 
+ \bigg(\int_0^t \|\sigma_h\|_{\V^*} \eta(w_{h},z_{h}:\V) \bigg)^{\frac{1}{2}}  \notag \\ 
& \hspace*{0.5cm}+ \bigg(\Big| \int_{0}^{t} \langle \sigma_{h} , w_h - \chi \rangle_{-1,1} \Big| \bigg)^{\frac{1}{2}} 
+ \bigg(\int_{0}^{t} \eta(w_{h,t},z_{h,t}:\V^*)^2  ds \bigg)^{\frac{1}{2}} 
+ \bigg( \Big| \int_{0}^{t} \langle\sigma_{h}^- ,  \mathcal{W} - \chi \rangle_{-1,1} \Big|  \bigg)^{\frac{1}{2}} \notag \\ 
& \hspace*{0.5cm}+ \frac{1}{2} \|w_h(0)-w_0\|_{L^2(\Om)} +  \frac{1}{2} \eta(w_h(0),z_h(0):L^2(\Omega)). \notag 
\end{align}
	\end{proof}

\begin{remark}
\rm 
	It is easy to see that for $v \in \mathcal{V}$, we have $\langle \sigma_h, v \rangle_{-1,1} 
	= (\sigma_h, v)$. Once this is used in the proof of Lemma \ref{lemma1}, the estimate in \eqref{key4} still remains true with $\|\sigma_h\|_{\mathcal{V}^*}$ replaced by $\|\sigma_h\|_{L^2(\Omega)}$. Different choices for $\sigma_{h}$ (eg., the ``broken" nodal multipliers \cite{fierro2003posteriori,nochetto2003pointwise} or using the partition of unity \cite{nochetto2005fully}) could give rise to different bounds for $\|\sigma_h\|_{\mathcal{V}^*}$ in estimate \eqref{eq}. 
\end{remark}
\section{Applications} \label{sec6}
In this section, we will provide an explicit form of $\eta(w_{h,t},z_{h,t}:\V^*)$ mentioned in estimate \eqref{eq}. In other words, we will bound $\|(w_h - \mathcal{W})_t\|_{\V^*}$ using \eqref{eq2}. Let $t \in (0, T)$ and $p:=(w_h - \mathcal{W})_t$, we first recall the definition of dual norm
\begin{align}
\|p\|_{\V^*}: =\sup_{\|v\|_\V \leq 1} \langle p, v \rangle_{-1,1}.
\end{align}
Let $\Omega$ be sufficiently smooth and using the duality arguments, we state the next remark which will be useful to derive the residual-type error estimate.
\begin{remark} \label{1}
		For any given $\chi \in \V$ and using the standard duality arguments, let us define $\phi \in \V$ by
	\begin{align}
a(\phi,v) =	\langle v, \chi \rangle_{-1,1} \quad \forall v \in \V \, . \label{eq3}
	\end{align}
	Also there exists a constant $C_1> 0$ which depends on $\Omega$ such that 
	\begin{align}
	\|\phi\|_{H^3(\Omega)} \leq C_1 \|\chi\|_{H^1(\Omega)}. \label{eq4}
	\end{align}
	\end{remark}
 Using definition \eqref{jump}, we denote $\mathcal{J}_{h,t}:=[[ \partial_{h} w_{h,t}]]$. In the next lemma, we provide the bound on the term $\|(w_h - \mathcal{W})_t\|_{\V^*}$.
\begin{lemma} \label{main2}
	Let $\Omega$ be sufficiently smooth and $k \geq 2$ be the polynomial degree, then it holds that
	\begin{align}
	\|p\|_{\V^*}:=\|(w_h - \mathcal{W})_t\|_{\V^*} \leq C \eta^1_h(w_{h,t}),
	\end{align}
	where $\eta^1_h(w_{h,t})^2:=  \sum_{K \in \Omega_h} h_K^6 \| (f+ \sigma_{h} + \Delta w_{h,t} -w_{h} )_t\|_{L^2(K)}^2 + \sum_{e \in \Gamma_h} h_K^5 \|\mathcal{J}_{h,t}\|_{L^2(e)}^2.$
	\end{lemma}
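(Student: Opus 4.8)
The starting point is the observation that $p=(w_h-\mathcal{W})_t=w_{h,t}-\mathcal{W}_t$ is nothing but the error of a Galerkin approximation of an elliptic problem: differentiating the orthogonality relation \eqref{galprop} in $t$ gives $a(p,v_h)=0$ for every $v_h\in\V_h$ (equivalently $w_{h,t}=\mathcal{P}_h\mathcal{W}_t$, as recorded in \eqref{eq2}), while by \eqref{def1} the function $\mathcal{W}_t$ solves the elliptic problem with right-hand side $z_{h,t}$. Consequently $\|p\|_{\V^{*}}$ can be controlled by a residual-type \emph{duality} argument. The only extra ingredient compared with the classical $L^{2}$ a posteriori estimate is that the norm is $\V^{*}=H^{-1}(\Omega)$, which buys one additional power of the local mesh size in each residual term; this is exactly why the weights in $\eta^1_h$ are $h_K^{6}$ and $h_K^{5}$ and why $k\ge2$ is assumed.

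Concretely, I would proceed as follows. Since $p\in\V\hookrightarrow L^{2}(\Omega)$, write $\|p\|_{\V^{*}}=\sup_{\|v\|_{\V}\le1}(p,v)$ and fix $v$ with $\|v\|_{\V}\le1$. Let $\phi\in\V$ be the object furnished by Remark~\ref{1} with datum $v$, i.e.\ $a(\phi,\tilde v)=(v,\tilde v)$ for all $\tilde v\in\V$; by elliptic regularity on the smooth domain $\Omega$ one has $\phi\in H^{3}(\Omega)$ with $\|\phi\|_{H^{3}(\Omega)}\lesssim\|v\|_{H^{1}(\Omega)}\le1$. Testing with $\tilde v=p$ and using the symmetry of $a(\cdot,\cdot)$ yields $(p,v)=a(p,\phi)$, and the differentiated Galerkin orthogonality $a(p,v_h)=0$ turns this into $(p,v)=a(p,\phi-\phi_h)$ for any $\phi_h\in\V_h$; I would take $\phi_h=I_h\phi$, the Lagrange interpolant of $\phi$, which is well defined since $H^{3}(\Omega)\hookrightarrow C^{0}(\overline\Omega)$ for $d\le3$ and lies in $\V_h$ precisely because $k\ge2$. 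Next, expand $a(p,\phi-\phi_h)=a(w_{h,t},\phi-\phi_h)-a(\mathcal{W}_t,\phi-\phi_h)$, replace the second summand by $(z_{h,t},\phi-\phi_h)$ via \eqref{def1}, and integrate the first summand by parts over each $K\in\Omega_h$, using that $\Delta w_{h,t}|_K$ is a polynomial and that $\phi-\phi_h$ is single-valued across interior edges and vanishes on $\partial\Omega$. This produces the standard residual splitting $(p,v)=-\sum_{K}\int_K(z_{h,t}+\Delta w_{h,t})(\phi-\phi_h)+\sum_{e\in\Gamma_h}\int_e\mathcal{J}_{h,t}(\phi-\phi_h)$, the two residuals being exactly those entering $\eta^1_h(w_{h,t})$.

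To finish, estimate each piece by Cauchy--Schwarz on $K$ resp.\ $e$ and insert the interpolation bounds $\|\phi-I_h\phi\|_{L^{2}(K)}\lesssim h_K^{3}|\phi|_{H^{3}(\omega_K)}$ and $\|\nabla(\phi-I_h\phi)\|_{L^{2}(K)}\lesssim h_K^{2}|\phi|_{H^{3}(\omega_K)}$ (valid for $k\ge2$; here $\omega_K$ is a local patch around $K$), the second combined with the discrete trace inequality of Lemma~\ref{dti} to get $\|\phi-I_h\phi\|_{L^{2}(e)}\lesssim h_K^{5/2}|\phi|_{H^{3}(\omega_K)}$. A discrete Cauchy--Schwarz over the elements (resp.\ edges), the finite overlap of the patches, and the regularity estimate $\|\phi\|_{H^{3}(\Omega)}\lesssim\|v\|_{H^{1}(\Omega)}\le1$ from Remark~\ref{1} then collapse the whole expression to $C\,\eta^1_h(w_{h,t})$; taking the supremum over $v$ gives the claim. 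The step I expect to demand the most care is tracking the mesh powers: obtaining $h_K^{6}$ and $h_K^{5}$ (rather than the $h_K^{4}$, $h_K^{3}$ of an $L^{2}$ estimate) hinges on having $\phi\in H^{3}(\Omega)$ from Remark~\ref{1} and on $k\ge2$ so that interpolation actually delivers order three. A secondary point is the implicit time-regularity of $f$, $\sigma_h$ and $w_h$ that makes $z_{h,t}$, hence $\eta^1_h(w_{h,t})$, meaningful, together with the bookkeeping required to justify $(p,v)=a(p,\phi-\phi_h)$ from the symmetry of $a(\cdot,\cdot)$ and $w_{h,t}=\mathcal{P}_h\mathcal{W}_t$.
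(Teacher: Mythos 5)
Your proposal is correct and follows essentially the same route as the paper: a duality argument with an $H^3$-regular dual problem (Remark \ref{1}), the time-differentiated Galerkin orthogonality $a(p,v_h)=0$ from \eqref{eq2}, elementwise integration by parts producing the interior residual $z_{h,t}+\Delta w_{h,t}$ and the jump $\mathcal{J}_{h,t}$ (the paper phrases this as $[[\partial_h\mathcal{W}_t]]=0$ by elliptic regularity), followed by third-order interpolation estimates (hence $k\ge2$), the trace inequality of Lemma \ref{dti}, and the bound $\|\phi\|_{H^3(\Omega)}\lesssim 1$. The only cosmetic difference is your use of the Lagrange interpolant where the paper takes a Cl\'ement-type interpolant, which does not change the argument.
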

\begin{proof}
	Using the definitions of $p$, $\phi$ (Remark \ref{1}) and suitable construction of interpolation operator $\Pi_{h}: \mathcal{V} \rightarrow \mathcal{V}_h$ (for instance Clement's interpolant \cite{brenner2007mathematical}), we consider the following 
	\begin{align*}
\langle p, \chi \rangle_{-1,1} & = a(\phi,p) \\ & =a(\phi- \Pi_h \phi, p) \quad (\text{using \eqref{eq2}} \mbox{ i.e., Galerkin Orthogonality}) \\ & \leq  \sum_{K \in \Omega_h} \int_{K} \big|(\phi- \Pi_h \phi) \Delta p \big| dx + \sum_{e \in \Gamma_h} \int_e \big|(\phi- \Pi_h \phi) [[ \partial_{h} p]]\big| \\ & =  \sum_{K \in \Omega_h} \int_{K} \big|(\phi- \Pi_h \phi) \Delta p \big| dx + \sum_{e \in \Gamma_h} \int_e \big|(\phi- \Pi_h \phi) \big| \big| [[ \partial_{h} w_{h,t}]] \big|
	\end{align*}
	where we used that $[[ \partial_{h} \mathcal{W}_{t}]]=0$ (in the last step) using the elliptic regularity. Then, we have
	\begin{align}
	\langle p, \chi \rangle_{-1,1} & =  \sum_{K \in \Omega_h} \int_{K} h_K^{-3}\big|(\phi- \Pi_h \phi) \big| h_K^3 \big|\Delta p \big| dx \notag   \\ & \hspace*{2cm} + \sum_{e \in \Gamma_h} \int_e h_e^{-\frac{5}{2}} \big|(\phi- \Pi_h \phi) \big| h_e^{\frac{5}{2}}  \big| [[ \partial_{h} w_{h,t}]] \big| ds \notag \\ & \leq \sum_{K \in \Omega_h} h_K^{-3} \|\phi- \Pi_h \phi\|_{L^2(K)} h_K^3\|\Delta p\|_{L^2(K)} \notag  \\ & \hspace*{2cm} + \sum_{e \in \Gamma_h}  h_e^{-\frac{5}{2}} \|\phi- \Pi_h \phi\|_{L^2(e)} h_e^{\frac{5}{2}} \|[[ \partial_{h} w_{h,t}]]  \|_{L^2(e)}. \label{3}
	\end{align}
	Using the following relation
	\begin{align*}
	\Delta p=\Delta(w_h - \mathcal{W})_t & = \Delta w_{h,t} - \Delta \mathcal{W}_{t} \\ & = \Delta w_{h,t} + (f+ \sigma_{h} -w_{h,t})_t \quad (\text{using \eqref{def:elrc}})  \\ & = (f+ \sigma_{h} + \Delta w_{h,t} -w_{h} )_t
	\end{align*}
	and the standard interpolation estimates \cite[Section 4.4]{brenner2007mathematical}, discrete trace inequality (Lemma \ref{dti}) and estimate \eqref{eq4}, we conclude from \eqref{3} 
		\begin{align*}
	\langle p, \chi \rangle_{-1,1} & \leq C_2 \eta^1_h(w_{h,t}) \| \phi \|_{H^3(\Omega)} \\ \implies \|p\|_{\V^*} & \leq C \eta^1_h(w_{h,t}),
	\end{align*}
	where $C_2>0$ is an interpolation estimate constant and $C= C_1 C_2$.
	\end{proof}
Using similar arguments, we have the following lemma for $k \geq 2$.
\begin{lemma} \label{main3}
	Let $C>0$ be a positive constant, then the following holds
	\begin{align}
	\eta(w_h,z_h:L^2(\Omega)) \leq C \eta^0_h(w_{h}), \label{ref}
	\end{align}
	where $\eta^0_h(w_{h})^2:=  \sum_{K \in \Omega_h} h_K^4 \| f+ \sigma_{h} + \Delta w_{h,t} -w_{h} \|_{L^2(K)}^2 + \sum_{e \in \Gamma_h} h_K^3 \|\mathcal{J}_{h}\|_{L^2(e)}^2.$
	\end{lemma}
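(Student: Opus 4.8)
The plan is to mirror the argument of Lemma~\ref{main2}, but working in the $L^2(\Omega)$ norm rather than $\V^*$, and using the static reconstruction $\mathcal{W}$ (via \eqref{def:elrc}) in place of its time derivative. First I would invoke \eqref{eq10}, which reduces the claim to bounding the abstract estimator $\eta(w_h,z_h:L^2(\Omega))$ for the elliptic pair $(\mathcal{W},w_h)$ solving \eqref{def:elrc} and its Galerkin restriction; by Lemma~\ref{Gal} we have $\mathcal{P}_h\mathcal{W}=w_h$, so the difference $w_h-\mathcal{W}$ is exactly the Galerkin error for the elliptic problem with load $z_h=f+\sigma_h-w_{h,t}$. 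I would then write $\|(w_h-\mathcal{W})(t)\|_{L^2(\Omega)}$ by duality against the solution $\psi\in\V$ of the dual problem $a(\psi,v)=(g,v)$ for $\|g\|_{L^2(\Omega)}\le 1$, and use that $\Omega$ is smooth so $\|\psi\|_{H^2(\Omega)}\le C\|g\|_{L^2(\Omega)}$ (Aubin--Nitsche).

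The core computation is then: for a Cl\'ement/Scott--Zhang interpolant $\Pi_h:\V\to\V_h$, use Galerkin orthogonality $a(w_h-\mathcal{W},\Pi_h\psi)=0$ to write $(g,w_h-\mathcal{W})=a(\psi-\Pi_h\psi,w_h-\mathcal{W})$, integrate by parts elementwise, and observe that $-\Delta(w_h-\mathcal{W})=-\Delta w_h-z_h=-(f+\sigma_h+\Delta w_h-w_{h,t})$ on each $K$ while the interface jumps reduce to $\jmp{\partial_h w_h}=\mathcal{J}_h$ since $\jmp{\partial_h\mathcal{W}}=0$ by elliptic regularity of $\mathcal{W}$. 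Inserting scaling factors $h_K^{-2}\cdot h_K^2$ on elements and $h_e^{-3/2}\cdot h_e^{3/2}$ on edges, applying Cauchy--Schwarz, the standard interpolation estimates $\|\psi-\Pi_h\psi\|_{L^2(K)}\lesssim h_K^2\|\psi\|_{H^2(\omega_K)}$ and $\|\psi-\Pi_h\psi\|_{L^2(e)}\lesssim h_e^{3/2}\|\psi\|_{H^2(\omega_e)}$, the discrete trace inequality (Lemma~\ref{dti}), and finite overlap of the patches, yields $(g,w_h-\mathcal{W})\le C\,\eta^0_h(w_h)\,\|\psi\|_{H^2(\Omega)}\le C\,\eta^0_h(w_h)$, hence \eqref{ref}. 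Note this uses $z_h=(f+\sigma_h-w_{h,t})$, so the elementwise residual is $f+\sigma_h+\Delta w_h-w_{h,t}$; the statement as printed writes $\Delta w_{h,t}$ rather than $\Delta w_h$ inside $\eta^0_h$, which I would treat as a typo and replace by $\Delta w_h - w_{h,t}$ (i.e.\ the residual $f+\sigma_h+\Delta w_h-w_{h,t}$), matching the relation derived in Lemma~\ref{main2}.

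The only genuine subtlety — and the step I expect to be the main obstacle in writing it cleanly — is the regularity needed to make $\jmp{\partial_h\mathcal{W}}=0$ and to give meaning to the elementwise integration by parts: one needs $\mathcal{W}\in H^2(\Omega)$ (so that $\nabla\mathcal{W}$ has a well-defined trace with no interface jump), which is exactly why the hypothesis ``$\Omega$ sufficiently smooth'' and $k\ge 2$ are imposed, and why $z_h\in L^2(\Omega)$ matters. Everything else is the routine residual-estimator bookkeeping already carried out in Lemma~\ref{main2}, just one power of $h$ lighter because we measure in $L^2(\Omega)$ instead of $\V^*$ (equivalently, the dual solution gains $H^2$ rather than $H^3$ regularity). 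I would therefore keep the write-up short, citing Lemma~\ref{main2} for the repeated interpolation/trace manipulations and only spelling out the residual identity $-\Delta(w_h-\mathcal{W})=-(f+\sigma_h+\Delta w_h-w_{h,t})$ and the $H^2$ duality bound.
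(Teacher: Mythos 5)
Correct, and essentially the paper's own route: the paper gives no separate proof of Lemma \ref{main3}, merely noting it follows by ``similar arguments'' to Lemma \ref{main2}, and your write-up is exactly that argument shifted one regularity level down (duality against an $H^2$-regular dual solution via Aubin--Nitsche, Galerkin orthogonality from Lemma \ref{Gal}, elementwise integration by parts with weights $h_K^2$ on elements and $h_e^{3/2}$ on edges), including the correct reading of the elementwise residual as $f+\sigma_h+\Delta w_h-w_{h,t}$, the printed $\Delta w_{h,t}-w_h$ being the same typo that already appears in Lemma \ref{main2}. One small side remark: the hypothesis $k\ge 2$ is not what guarantees $\mathcal{W}\in H^2(\Omega)$ --- that follows from the smoothness of $\Omega$ and $z_h\in L^2(\Omega)$ alone, and your $L^2$-duality argument needs only $k\ge 1$; $k\ge 2$ is needed only for the $H^3$-order interpolation estimates in Lemma \ref{main2}.
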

Finally, we use Lemmas \ref{main2} and \ref{main3} in Theorem \ref{main1} to state the residual type a posteriori error estimate for the displacements.
\begin{theorem}
	Let $t \in (0,T]$ and $\Omega$ be sufficiently smooth. If $k \geq 2$, then, the following a posteriori error estimate hold for $0 < t \le T$:
	\begin{align}
	\frac{1}{2} \|w(t)-w_h(t)\|_{L^2(\Om)} & \lesssim  \frac{1}{2} \eta^0_h(w_{h}) + \bigg(\int_{0}^{t} \|\sigma^-_{h}(s) \|^2_{\V^*}  ds \bigg)^{\frac{1}{2}} + \bigg(\int_{0}^{t} (\eta^1_h(w_{h,t}(s)))^2  ds \bigg)^{\frac{1}{2}}  \notag \\ & \hspace*{-2.5cm} + \frac{1}{2} \|w_h(0)-w_0\|_{L^2(\Om)} +\frac{1}{2} \eta^0_h(w_{h}(0)) + \bigg(\int_0^t \|\sigma_h\|_{\V^*} \eta^0_h(w_{h})  \bigg)^{\frac{1}{2}}  \notag \\ 
	&  \hspace*{-2.5cm}+ \bigg( \Big| \int_{0}^{t}  \langle \sigma_{h} , w_h - \chi \rangle_{-1,1} \Big| \bigg)^{\frac{1}{2}} 
	+ \bigg( \Big| \int_{0}^{t} \langle\sigma_{h}^- ,  \mathcal{W} - \chi \rangle_{-1,1} \Big| \bigg)^{\frac{1}{2}}.
	\end{align}
\end{theorem}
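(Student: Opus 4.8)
The final theorem is a corollary of Theorem~\ref{main1} obtained by substituting the explicit residual estimators from Lemmas~\ref{main2} and~\ref{main3}. So the proof should simply read: ``Apply Theorem~\ref{main1}, and bound the two abstract estimators $\eta(w_h,z_h:L^2(\Omega))$ and $\eta(w_{h,t},z_{h,t}:\V^*)$ appearing in \eqref{eq} using Lemmas~\ref{main3} and~\ref{main2} respectively.'' Concretely, I would first recall the bound \eqref{eq} from Theorem~\ref{main1}, then invoke \eqref{ref} to replace $\tfrac12\eta(w_h,z_h:L^2(\Omega))$ by $\tfrac12 C\,\eta^0_h(w_h)$ and likewise at $t=0$ to replace $\tfrac12\eta(w_h(0),z_h(0):L^2(\Omega))$ by $\tfrac12 C\,\eta^0_h(w_h(0))$, and then invoke Lemma~\ref{main2} to replace $\eta(w_{h,t},z_{h,t}:\V^*)$ inside $\big(\int_0^t \eta(w_{h,t},z_{h,t}:\V^*)^2\,ds\big)^{1/2}$ by $C\,\eta^1_h(w_{h,t}(s))$, pulling the constant out of the time integral. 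The term $\big(\int_0^t \|\sigma_h\|_{\V^*}\,\eta(w_h,z_h:\V)\,ds\big)^{1/2}$ is handled analogously: in the $L^2$--error context $\eta(w_h,z_h:\V)$ can itself be dominated (up to a constant) by the residual estimator $\eta^0_h(w_h)$ (cf.\ \eqref{ref} and the surrounding discussion), giving the term $\big(\int_0^t \|\sigma_h\|_{\V^*}\,\eta^0_h(w_h)\,ds\big)^{1/2}$ in the statement. Since all these substitutions only introduce multiplicative constants, the ``$\leq$'' of Theorem~\ref{main1} becomes the ``$\lesssim$'' of the final statement, and the remaining three terms — the two complementarity-type terms $\big|\int_0^t\langle\sigma_h,w_h-\chi\rangle\big|^{1/2}$, $\big|\int_0^t\langle\sigma_h^-,\mathcal{W}-\chi\rangle\big|^{1/2}$ and $\big(\int_0^t\|\sigma_h^-\|_{\V^*}^2\,ds\big)^{1/2}$ — are already in computable/residual form and are carried over unchanged, as is the initial-data term $\tfrac12\|w_h(0)-w_0\|_{L^2(\Om)}$.

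**Hypotheses to verify.** The only genuine content is checking that the hypotheses of the lemmas are in force: $\Omega$ sufficiently smooth (assumed in the statement) and $k\geq 2$ (assumed), so that Lemma~\ref{main2}'s duality argument, which uses the $H^3$-regularity \eqref{eq4} of the dual problem and the fact that $[[\partial_h\mathcal{W}_t]]=0$, applies; and likewise for Lemma~\ref{main3}. I would state explicitly at the start of the proof that these conditions guarantee the validity of \eqref{eq2} (the Galerkin orthogonality $\mathcal{P}_h\mathcal{W}_t = w_{h,t}$) and the $L^2$ a posteriori bounds.

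**Expected main obstacle.** There is essentially no obstacle — this is a bookkeeping argument. The only mild subtlety is cosmetic consistency: making sure the estimator $\eta^0_h(w_h)$ as it appears in the final display (with the term $\Delta w_{h,t}$ inside the element residual) matches what Lemma~\ref{main3} literally produces, since the natural $L^2$-residual for the elliptic reconstruction problem involves $f+\sigma_h - w_{h,t}$ rather than $f+\sigma_h+\Delta w_{h,t} - w_h$; this is a typographical alignment between the two lemmas' notation that I would simply state is understood, rather than belabour. Once that is acknowledged, the proof is one paragraph of substitution into \eqref{eq}.
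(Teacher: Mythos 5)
Your proposal is correct and follows essentially the same route as the paper, which simply states that the theorem results from using Lemmas \ref{main2} and \ref{main3} in Theorem \ref{main1}; your substitution argument (including pulling constants out of the time integrals and turning $\leq$ into $\lesssim$) is exactly this. Your remark about the $\V$-norm estimator term and the notational form of $\eta^0_h(w_h)$ correctly identifies the only points the paper glosses over, and handling them as you describe is consistent with the paper's intent.
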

	Let $k =1$ be the polynomial degree as discussed in \cite{makridakis2003elliptic}, due to the lack of superconvergence in $\V^*$, the use of negative norm does not give better results. Hence, we use the following estimate using \eqref{ref}
	\begin{align}
	L^2(\Omega) \hookrightarrow \V^* \text{ and } \eta(w_{h,t},z_{h,t}:\V^*) \leq \eta(w_{h,t},z_{h,t}:L^2(\Omega)) \leq C \eta^0_h(w_{h,t}) ,
	\end{align}
	where the last inequality holds due to the classical duality argument, see \cite[Sec.~3.3]{MR1442375}. 
	In view of Theorem \ref{main1}, we obtain the following a posteriori error estimate for $k =1$ 
		\begin{align}
	\frac{1}{2} \|w(t)-w_h(t)\|_{L^2(\Om)} & \lesssim  \frac{1}{2} \eta^0_h(w_{h}) + \bigg(\int_{0}^{t} \|\sigma^-_{h}(s) \|^2_{\V^*}  ds \bigg)^{\frac{1}{2}} + \bigg(\int_{0}^{t} (\eta^0_h(w_{h,t}(s)))^2  ds \bigg)^{\frac{1}{2}}  \notag \\ 
	& \hspace*{-2.5cm} + \frac{1}{2} \|w_h(0)-w_0\|_{L^2(\Om)} +\frac{1}{2} \eta^0_h(w_{h}(0)) + \bigg(\int_0^t \|\sigma_h\|_{\V^*} \eta^0_h(w_{h})  \bigg)^{\frac{1}{2}}  \notag \\ 
	&  \hspace*{-2.5cm}+ \bigg(\Big| \int_{0}^{t} \langle \sigma_{h} , w_h - \chi \rangle_{-1,1} \Big| \bigg)^{\frac{1}{2}} + \bigg(\Big| \int_{0}^{t} \langle\sigma_{h}^- ,  \mathcal{W} - \chi \rangle_{-1,1} \Big| \bigg)^{\frac{1}{2}}.
	\end{align}

\section*{Acknowledgement}	
We are thankful to Mahamadi Warma for suggestions on Remark~\ref{rem:sign} and several discussions.
	
 \bibliographystyle{plain}
\bibliography{AK}
\end{document}